\newtheorem{theor}{Theorem}
\newtheorem{pr}{Proposition}
\newtheorem{cl}{Corollary}
\theoremstyle{definition}
\newtheorem{ex}{Example}
\theoremstyle{remark}
\numberwithin{equation}{section}
\begin{document}
\runningtitle{On some classes of rings and their links}
\title{On some classes of rings and their links}

\author[1]{Hamideh Pourtaherian}
\cauthor
\author[2]{Isamiddin S.Rakhimov}

\address[1]{Department of Mathematics, FS, Universiti Putra Malaysia (Malaysia), \email{pourtaherian09@gmail.com}}

\address[2]{Institute for Mathematical Research (INSPEM), Department of Mathematics, FS, Universiti Putra Malaysia (Malaysia)
\addrbreak
Institute of Mathematics (Uzbekistan), \email{risamiddin@gmail.com}}

\authorheadline{Hamideh Pourtaherian and Isamiddin S.Rakhimov}

%

\begin{abstract}
The paper deals with Armendariz rings, their relationships with
some well known rings. Then we treat generalizations of Armendariz
rings, such as McCoy ring, abelian ring and their links. We also consider a skew version of some classes of rings, with
respect to a ring endomorphism $\alpha. $
\end{abstract}
\maketitle

Keywords: Armendariz ring, skew polynomial ring, reversible,
symmetric, semicommutative ring.

 MSC: 16S36; 16W20; 16S99
\section{Introduction}
This paper investigates a class of rings called Armendariz rings,
which generalizes fields and integral domains. These rings are
associative with identity and they have been introduced by Rege and
Chhawchharia in \cite{rege}. A ring $R$ is
called \emph{Armendariz} if whenever the product of any two
polynomials in $R[x]$ is zero, then so is the product of any pair
of coefficients from the two polynomials.

In \cite{ep} E.P.Armendariz proved that if the product of two
polynomials, whose coefficients belong to a ring without nonzero
nilpotent elements, equals zero then all possible pair wise
products of coefficients of these polynomials equal zero.

Let $R$ be a ring and $\alpha :R\longrightarrow R$ be an
endomorphism. Then $\alpha $-derivation $\delta $ of $R$ is an
additive map such that $\delta (ab)=\delta (a)b+\alpha (a)\delta
(b),$ for all $a,b\in R.$ \emph{The Ore extension} $R[x;\alpha
,\delta ]$ of $R$ is the ring with the new multiplication $%
xr=\alpha (r)x+\delta (r)$ in the polynomial ring over $R,$
where $r\in R.$ If $\delta =0,$ we write $%
R[x;\alpha ]$ and it is said to be a skew polynomial ring (also
\emph{The Ore extension of endomorphism type}.)

Some properties of skew polynomial rings have been studied in
\cite{c1}, \cite{skew}, \cite{a}, \cite{?} and \cite{??}.
According to Krempa \cite{k1}, an endomorphism $\alpha $ of a ring
$R$ is called rigid, if for $r\in R$ the condition $r\alpha (r)=0$
implies $r=0$ . In \cite{rigid}, a ring $R$ has been called
$\alpha $-rigid if there exists a rigid endomorphism $\alpha $ of
$R.$ In the same paper it has been shown also that any rigid
endomorphism of a ring is a monomorphism and $\alpha$-rigid rings
are reduced.

 Hong et al. \cite{a}, introduced the concept of $\alpha$-Armendariz
 ring, which is a generalization of $\alpha$-rigid ring and Armendariz ring.
 A ring $R$ is called $\alpha$-Armendariz ring, if whenever the product of any two polynomials
 in $R[x; \alpha]$ is zero, then so is the product
 of any pair of coefficients from the two polynomials.

The organization of the paper is as follows. First, we consider
the relationship between Armendariz rings and some other classes
of rings (Section 1), then we treat generalizations of Armendariz
rings (Section 2). The skew version of some
classes of rings are considered in Section 3. Through the paper $\alpha$ stands for an
endomorphism of ring $R.$

\section{Armendariz rings and other rings}
In this section we explore relationships between several classes
of rings. Recall that a ring
$R$ is said to be \emph{von Neumann regular}, if $a \in aRa$ for any
element $a$ of $R. $ Every Boolean ring is von Neumann regular.
Reduced rings are Armendariz, but the converse does not hold.
Anderson and Camillo (see \cite{an}) proved that a von Neumann
regular ring is Armendariz, if and only if it is reduced.

\begin{pr} A commutative von Neumann regular ring is reduced.
\end{pr}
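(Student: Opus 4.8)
The plan is to prove the statement directly from the definition of reduced, namely that $R$ has no nonzero nilpotent elements. The crux is that von Neumann regularity produces, for each $a$, an element $x$ with $a = axa$, and in the commutative setting this relation collapses the moment $a$ is nilpotent. First I would dispose of the index-two case: suppose $a \in R$ satisfies $a^2 = 0$. Regularity gives $x \in R$ with $a = axa$, and commutativity lets me rewrite $axa = a^2x$. Substituting $a^2 = 0$ then yields $a = a^2 x = 0$, so every square-zero element vanishes.

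Next I would reduce an arbitrary nilpotent element to this case. Suppose $a$ is nilpotent and let $n \geq 1$ be minimal with $a^n = 0$. If $n \geq 2$, set $b = a^{n-1}$, which is nonzero by minimality of $n$, and observe that $b^2 = a^{2n-2}$. Since $2n-2 \geq n$ when $n \geq 2$, we get $a^{2n-2} = 0$, so $b^2 = 0$; by the square-zero step this forces $b = a^{n-1} = 0$, contradicting minimality. Hence $n = 1$, i.e. $a = 0$. Combining the two steps shows $R$ has no nonzero nilpotents, so $R$ is reduced.

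I expect no genuine obstacle here, as the argument is short; the one point to handle with care is that commutativity is essential precisely at the rewriting $axa = a^2x$, since without it the regularity identity $a = axa$ does not immediately collapse. The passage from general nilpotency to index two is routine and could alternatively be avoided by applying regularity to a nilpotent element directly, but the square-zero case is the cleanest starting point and makes the role of commutativity transparent.
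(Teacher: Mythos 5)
Your proof is correct and uses the same key step as the paper: commutativity turns the regularity identity $a=axa$ into $a=a^{2}x$, which vanishes when $a^{2}=0$. The additional reduction from a general nilpotent element to the square-zero case is the standard argument the paper leaves implicit, so the two proofs are essentially identical.
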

\begin{proof}
 Let $R$ be a commutative von Neumann regular ring and $a$ be an
element of $R.$ Suppose that $a^{2}=0.$ By the hypothesis, $a=a b
a=a^{2}b=0. $ Hence $R$ is reduced.
\end{proof}

By Kaplansky \cite{k}, a ring $R$ is called a right $p.p$-ring, if
the right annihilator $Ann_r(a)$ of each element $a$ of $R$ is generated by an
idempotent. A ring $R$ is called \emph{Baer}, if the right annihilator of
every nonempty subset of $R$ is generated by an idempotent.
Clearly Baer ring is right $p.p$-ring. Any Baer ring has nonzero
central nilpotent element, then a commutative Baer ring is
Armendariz.

Reduced rings can be included in the class of Armendariz rings and
the class of semicommutative rings. The last two are abelian.
It is natural to explore the relationships between them. A ring is
said to be \emph{semicommutative}, if it satisfies the following
condition:
$$\mbox{whenever elements}\ a,b \in R \ \mbox{satisfy}\ ab=0, \ \mbox{then}\
aRb=0 .$$
 Semicommutative rings are abelian, but the converse
does not hold, which has been showed by Kim and Lee in \cite{lee}.

Another class of rings is the class of Guassian rings, which has been treated by Anderson
and Camillo \cite{an}. The content $c(f)$ of a polynomial $f(x)\in R[x]$ is the ideal
of $R$ generated by the coefficients of $f(x).$ A commutative
ring $R$ with identity is \emph{Guassian}, if $c(fg)=c(f) c(g)$
for all $f(x), g(x) \in R[x]. $ The Guassian rings are Armendariz, but
the converse is not true. Any integral domain is Armendariz, but
it is not necessarily Guassian. A field is Guassian, thus it is
Armendariz.

Recall that, a ring $R$ is called \emph{symmetric}, if $abc=0$
implies $acb=0$ for $a, b$ and $c$ in $R. $ A ring $R$
\emph{reversible} provided $ab=0$ implies $ba=0$ for $a, b \in R.
$
Semicommutative ring is a generalization of reversible ring. A ring is said to be \emph{abelian} if any its idempotent is central. Further we make use the notation ``$\implies$'' to denote for one class of rings to be a subclass of another class.

\begin{theor} The following implications
hold true:
$$reduced \implies  symmetric \implies
reversible \implies  semi-commutative \implies  abelian.$$
\end{theor}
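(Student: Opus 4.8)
The plan is to prove the chain of implications one link at a time, since each is a self-contained deduction from the relevant definitions. The overall strategy is straightforward: assume a ring lies in the stronger class, take an arbitrary relation among elements of the form required by the weaker class, and manipulate it using the defining property of the stronger class until the weaker property emerges.

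First I would show that reduced implies symmetric. Recall a reduced ring has no nonzero nilpotent elements. Suppose $abc=0$; the goal is $acb=0$. The standard trick is to compute $(acb)^{n}$ or, more cleanly, to show $acb$ is nilpotent by sandwiching. I would consider the element $acb$ and show $(acb)^{2}=acbacb$ can be forced to zero using $abc=0$ together with the fact that in a reduced ring $xy=0$ implies $yx=0$ (reducedness gives reversibility as an intermediate fact, since if $xy=0$ then $(yx)^{2}=y(xy)x=0$, so $yx=0$). Using this reversibility repeatedly on $abc=0$ I can cyclically permute and reassemble the product to conclude $acb$ is nilpotent, hence zero. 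Next, symmetric implies reversible is the easiest link: given $ab=0$, write this as $a\cdot b\cdot 1=0$, so by symmetry $a\cdot 1\cdot b=ab=0$ — that is the wrong arrangement, so instead I would use $1\cdot a\cdot b=0$ and apply symmetry to get $1\cdot b\cdot a=ba=0$.

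The step reversible implies semicommutative requires a little more care. Assume $ab=0$ and take any $r\in R$; I must show $arb=0$. By reversibility $ab=0$ gives $ba=0$. The plan is to produce a relation of the form $(\text{something})(\text{something})=0$ to which reversibility applies and which rearranges to $arb$. Starting from $ba=0$, multiply on the left by $r$ to get $rba=0$, then apply reversibility to $(rb)a=0$ to obtain $a(rb)=arb=0$, which is exactly what is wanted. Finally, semicommutative implies abelian: let $e$ be an idempotent; I need $e$ central, i.e. $er=re$ for all $r$. The idea is to apply semicommutativity to the two relations $e(1-e)=0$ and $(1-e)e=0$. From $e(1-e)=0$ semicommutativity gives $eR(1-e)=0$, so $er(1-e)=0$, hence $ere=er$; from $(1-e)e=0$ it gives $(1-e)re=0$, hence $ere=re$. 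Combining the two yields $er=ere=re$, so $e$ is central.

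I expect the reduced-implies-symmetric step to be the main obstacle, since it is the only link where the conclusion ($acb=0$) is not obtained by a one-line rearrangement but rather requires exhibiting nilpotence and invoking the no-nilpotents hypothesis. The bookkeeping of which products vanish, and establishing the auxiliary reversibility of reduced rings first, is where a careless argument could go wrong; everything after that step is routine multiplication and application of the definitions.
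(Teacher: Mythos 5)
Your last three links (symmetric $\implies$ reversible, reversible $\implies$ semicommutative, semicommutative $\implies$ abelian) are correct and essentially identical to the paper's proof: the $1\cdot a\cdot b=0$ trick, the computation $ba=0\implies rba=0\implies (rb)a=0\implies a(rb)=0$, and the two relations $er(1-e)=0$, $(1-e)re=0$ forcing $er=ere=re$ all match.

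The gap is in the first link, exactly where you predicted trouble. You propose to prove $acb=0$ from $abc=0$ by "cyclically permuting and reassembling" using only the reversibility of reduced rings. That cannot work as stated: applying reversibility to $abc=0$ yields only the cyclic rotations $bca=0$ and $cab=0$, and the word $(acb)^2=acbacb$ contains no consecutive occurrence of $abc$, $bca$, or $cab$ (its consecutive triples are $acb$, $cba$, $bac$, $acb$), so no grouping of $(acb)^2$ exposes a known zero factor. The same is true of $(bac)^2$ and $(cba)^2$. You need a second ingredient that lets you \emph{insert} new letters into a vanishing product, not merely rotate it. Two ways to supply it: (i) first prove that a reduced ring is semicommutative ($xy=0\implies yx=0\implies (xzy)^2=xz(yx)zy=0\implies xzy=0$), and then chain: $cab=0\implies cacb=0\implies cbca=0\implies cbaca=0\implies acbac=0$, whence $(acb)^2=acbac\cdot b=0$; or (ii) follow the paper's direct iteration, which manufactures the auxiliary square-zero elements $cab$, then $abac$ (via $(abac)^2=aba(cab)ac$), then $bacba$ (via $(bacba)^2=bacb(abac)ba$), and finally $(bac)^2=bacba\cdot c=0$. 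Either route requires identifying specific non-obvious intermediate products; your sketch, as written, does not produce them, so the crucial step is not yet a proof.
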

\begin{proof}
\begin{enumerate}\item ``$reduced \implies  symmetric:$'' Let $R$ be a reduced ring and $abc=0$ for
$a, b, c \in R. $ Then $c(abc)ab=0$ and $(cab)^{2}=0. $ Since $R$
is reduced, we get $(cab)=0. $ Hence $aba(cab)ac=(abac)^{2}=0$ and
$abac=0$ (by reducibility). Thus $bacb(abac)ba=(bacba)^{2}=0$ then
$bacba=0. $  Multiply the last from the right hand side by $c$ we
obtain $(bac)^{2}=0. $ By using the reducibility of $R$ we have
$bac=0. $
\item ``$symmetric \implies reversible:$'' Let $R$
be a symmetric ring and $ab=0$ for $a, b \in R. $ Since $R$ is a
ring with identity, we have $a\cdot b \cdot 1=0$ and $b \cdot a
\cdot 1=0. $ Therefore $R$ is reversible ring.
 \item ``$reversible \implies semicommutative:$'' Let $R$ be a reversible ring
and $ab=0. $  We claim that $aRb=0. $ By the hypothesis, $ba=0. $
Let $c$ be an arbitrary element of $R. $ Hence $c(ba)=0, $ and
$(cb)a=0. $ By using the reversibility of $R$ we have $acb=0.$
Thus $aRb=0. $ Therefore $R$ is semicommutative ring.
\item ``$semi-commutative \implies abelian:$'' Let $e$ be an
idempotent element of a semicommutative ring $R.$ Then
$e^{2}-e=0.$ Since $ea(e-1)=0$ for each element $a$ of $R$, we get
$ea=eae.$ Since $(1-e)$ is idempotent, then $(1-e)^{2}-1+e=0. $
Hence $(e-1)e=0. $ By using the semicommutativity of $R$ we
obtain $(e-1)ae=0$ for each $a \in R. $ Then $eae=ae. $ Thus $e$
is central. Therefore $R$ is abelian.
\end{enumerate}
\end{proof}

Here is an example of ring that is commutative, Boolean, von
Neumann regular, $p.p.-$ring, reduced and Armendariz, but is not
Baer (see \cite{c}).
 \begin{ex}(Dorroh extension) We refer the example of
\cite{c}. Let $S_{0}=\mathbb{Z}_{2}$, $S_{1}=\mathbb{Z}_{2}\ast
\mathbb{Z}_{2}$, $S_{3}=S_{2}\ast \mathbb{Z}_{2}$,..., $
S_{n}=S_{n-1}\ast \mathbb{Z}_{2},...,$ where the operation on
$S_{n}$ is defined as follows: for $(a,\bar{b}),(c,\bar{d})\in
S_{n}$ with $a,c\in S_{n-1}$
$$(a,\bar{b})+(c,\bar{d})=(a+c,\overline{b+d})$$ and $$
(a,\bar{b})(c,\bar{d})=(ac+bc+da,\overline{bd})$$ where
$n=1,2,\ldots.$ \\It is clear that there is the ring-monomorphism
$\ f:S_{n-1}\longrightarrow S_{n}$ defined by $ f(x)=(x,0)$. Now
construct the direct product $\ \prod\limits_{n=1}^{\infty }S_{n}$
with $S_{1}\subset S_{2}\subset ...$ and consider $R=\left\langle
\bigoplus\limits_{n=1}^{\infty }S_{n},1_{s}\right\rangle.$
Clearly, $R$ is a $\mathbb{Z}_{2}$ -subalgebra of
$\prod\limits_{n=1}^{\infty }S_{n},$ generated by $
\bigoplus\limits_{n=1}^{\infty }S_{n}$ and $1_{s},$  where
$S=\prod\limits_{n=1}^{\infty }S_{n}.$ Every $S_{n}$ is Boolean
and also von Neumann regular. Therefore $R$ is commutative von
Neumann regular ring, we get $R$ is Armendariz. On the other hand,
every $S_{n}$ is Boolean, then $R$ is Boolean. This implies that
$R$ is a $p.p.-$ring.
\end{ex}

\section{Generalizations of Amendariz rings}
 Abelian rings are generalization of Armendariz rings. This result due to Kim and Lee \cite{lee}. The following theorem specifies a subclass of the class of abelian rings which is Armendariz.
\begin{pr}
An abelian right $p.p.-$ring is Armendariz.
\end{pr}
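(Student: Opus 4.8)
The plan is to start from the defining data of the Armendariz condition. Suppose $f(x)=\sum_{i=0}^{m}a_i x^i$ and $g(x)=\sum_{j=0}^{n}b_j x^j$ in $R[x]$ satisfy $f(x)g(x)=0$. Comparing coefficients gives, for each $k$, the relation $\sum_{i+j=k}a_i b_j=0$; call it $(E_k)$. The goal is to deduce $a_i b_j=0$ for every pair $(i,j)$. The single tool I would extract first is an annihilator lemma: since $R$ is a right $p.p.$-ring, for any $a\in R$ we have $\mathrm{Ann}_r(a)=eR$ for some idempotent $e$, and since $R$ is abelian this $e$ is central. Consequently, whenever $ab=0$ there is a central idempotent $e$ with $ae=0$ and $b=eb=be$. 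This is the only structural fact about $R$ I will use, and the whole argument is an exercise in combining such idempotents.

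I would prove the claim by a double induction. The outer induction is on the total degree $k=i+j$: assuming $a_ib_j=0$ for all $i+j<k$, I show $a_ib_j=0$ for all $i+j=k$. The base case $k=0$ is exactly $(E_0)$. For the inductive step I run an inner induction that clears the products $a_p b_{k-p}$ in the order $p=k,k-1,\dots,0$. After the terms with index larger than $p$ have been handled, $(E_k)$ has been reduced to $\sum_{i=0}^{p}a_i b_{k-i}=0$, and the task is to peel off its top term $a_p b_{k-p}$.

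The isolation step is where the idempotents do the work. Write $q=k-p$. For each $i<p$ the product $a_i b_q$ has total degree $i+q<k$, so by the outer induction hypothesis $a_i b_q=0$; the annihilator lemma then supplies a central idempotent $e_i$ with $a_i e_i=0$ and $b_q e_i=b_q$. Setting $e=\prod_{i<p}e_i$ (again a central idempotent), I would right-multiply the reduced equation $\sum_{i=0}^{p}a_i b_{k-i}=0$ by $e$. Every term with $i<p$ dies, because centrality lets me move $e$ next to $a_i$ and $a_i e=0$, while the surviving term becomes $a_p b_q e=a_p b_q$ since $e$ fixes $b_q$. Hence $a_p b_q=0$, completing the inner step and letting the recursion continue.

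The delicate point --- and the step I expect to be the main obstacle --- is precisely the choice of which lower relations to feed into the idempotent product. The naive attempt to isolate $a_p b_q$ by killing the unwanted terms $a_i b_{k-i}$ directly fails for $i>p$, since those products have total degree exceeding $k$ and are not yet available. The fix is the ordering (decreasing $p$, so the high-$i$ terms are removed first by reduction) together with the observation that the idempotent should come from the relation $a_i b_q=0$ sharing the target's $b$-index: this simultaneously annihilates the unwanted $a_i$ and fixes $b_q$, and crucially such a relation has total degree $i+q<k$ and so is covered by the induction. Once this bookkeeping is set up correctly, the remaining verifications are the routine manipulations of central idempotents indicated above.
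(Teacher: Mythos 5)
Your proof is correct, but it takes a genuinely different route from the paper's. The paper argues in two short steps: first it shows that an abelian right $p.p.$-ring is reduced (if $r^{2}=0$, then $r\in \mathrm{Ann}_r(r)=eR$ with $e$ a central idempotent, so $r=er=re=0$ since $e$ annihilates $r$ on the right), and then it invokes the classical theorem of Armendariz that reduced rings are Armendariz. You instead bypass reducedness entirely and verify the Armendariz condition directly, by a double induction on the coefficient relations $\sum_{i+j=k}a_ib_j=0$, using at each stage the central idempotent $e$ with $ae=0$ and $be=b$ supplied by $ab=0$. Your key lemma and the bookkeeping (choosing $e_i$ from the relation $a_ib_q=0$ of lower total degree, so that $e=\prod_{i<p}e_i$ kills the unwanted terms while fixing $b_q$) are sound, and the argument goes through. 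The trade-off: the paper's proof is three lines but rests on an external, nontrivial result; yours is longer but self-contained, and in effect it re-runs the inductive scheme of Armendariz's original theorem with the central idempotents playing the role that the absence of nilpotents plays there. Had you first observed (as the paper does) that the hypotheses force $R$ to be reduced, you could have shortened your write-up considerably, but as it stands nothing is missing.
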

\begin{proof}
Let $r$ be a nilpotent and $e$ be an idempotent elements of $R.$
Suppose that $r^{2}=0.$ Since $r\in Ann_{R}\left( r\right) =eR$,
there exists $r'\in R$ such that $r=er'$ and $er=e^{2}r'=er'.$
Hence $r=er=re=0. $ Which means that $R$ is reduced, therefore it is Armendariz.
\end{proof}

McCoy rings are another generalization of Armendariz rings. Recall
that a ring $R$ is a \emph{left McCoy}, if whenever $g(x)$  is a
right zero-divisor in $R[x]$ there exists a non-zero element $c
\in R$ such that $cg(x)=0. $ \emph{Right McCoy} ring is defined
dually. A ring is said to be \emph{McCoy} ring, if it is both left
and right McCoy. Armendariz rings are McCoy (see \cite{rege}), the
converse does not hold. Commutative rings are McCoy (Scott
\cite{s}), but there are examples of commutative non-Armendariz
rings.

Here is an example of a noncommutative McCoy ring that is not
Armendariz.
\begin{ex}\emph{}\\
Let $R$ be a reduced ring and $R_{n}=\left\{\left(
\begin{array}{ccccccc}
a & a_{12} & a_{13} & . & . & . & a_{1n} \\
0 & a & a_{23} & . & . & . & a_{2n} \\
0 & 0 & a & . & . & . & a_{3n} \\
. & . & . & . &  &  & . \\
. & . & . &  & . &  & . \\
. & . & . &  &  & . & . \\
0 & 0 & 0 & . & . & . & a
\end{array}%
\right) :a,a_{ij}\in R\right\}.$

Then $R_{n}$ is McCoy for any
$n\geq 1$ \cite{mc}, but it is not Armendariz for $n\geq 4$
\cite{lee}.
\end{ex}

All the previous results are included in Figure~1 below.
\begin{figure}[!hbtp]
\begin{center}
\includegraphics[width=4 in]{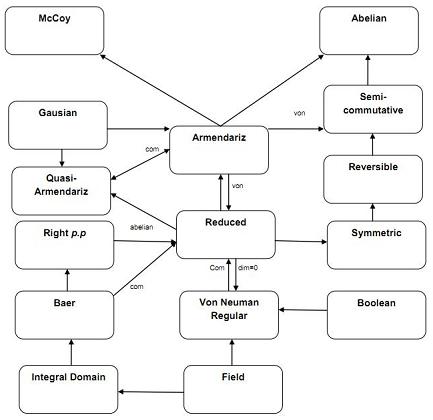}
\caption{Links between Armendariz and other rings.}
 \end{center}
 \label{Fig1}
 \end{figure}


\section{Skew version of rings }
In this section, we consider a skew version of some classes of
rings, with respect to a ring endomorphism $\alpha. $ When $\alpha
$ is the identity endomorphism, this coincides with the notion of
ring.

Kwak \cite{sym}, called an endomorphism $\alpha $ of a ring $R, $
\emph{right} (respectively, \emph{left}) \emph{symmetric} if whenever $abc=0$ implies
$ac\alpha (b)=0$ (respectively, $\alpha (b)ac=0)$ for $a,b,c \in R. $ A ring $R$
is called \emph{right} (respectively, \emph{left}) $\alpha $-\emph{symmetric} if there exists
a right (respectively, left) symmetric endomorphism $\alpha $ of $R. $ The ring
$R$ is \emph{$\alpha
$-symmetric} if it is right and left $\alpha $-symmetric.
Obviously, domains are $\alpha $-symmetric for any endomorphism
$\alpha.$

Ba\c{s}er et al. \cite{rev}, called a ring $R$ \emph{right} (respectively, \emph{left})
$\alpha$- \emph{reversible} if whenever $ab=0$ for $a, b \in R$ then $b
\alpha(a)=0$ (respectively, $\alpha (b)a=0$). The ring $R$ is called
\emph{$\alpha$- reversible} if it is both right and left $\alpha$-
reversible.

\begin{pr} An $\alpha$-symmetric ring is $\alpha$-reversible.
\end{pr}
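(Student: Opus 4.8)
The plan is to unpack both defining conditions and exploit the presence of the multiplicative identity. Since $R$ is $\alpha$-symmetric, the fixed endomorphism $\alpha$ is simultaneously right symmetric and left symmetric, so I must establish that $R$ is both right and left $\alpha$-reversible using this single $\alpha$. The guiding idea is that each $\alpha$-reversibility condition involves only two elements, while the symmetric conditions involve three; the gap is bridged by substituting the identity $1$ into the appropriate slot.

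First I would establish right $\alpha$-reversibility. Assume $ab=0$ and rewrite this product as $1\cdot a\cdot b=0$. Applying the right symmetric condition, namely $xyz=0 \implies xz\alpha(y)=0$, to the triple $(x,y,z)=(1,a,b)$ yields $1\cdot b\cdot \alpha(a)=b\alpha(a)=0$, which is precisely the required right $\alpha$-reversibility conclusion $ab=0 \implies b\alpha(a)=0$.

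Next I would treat left $\alpha$-reversibility in a symmetric fashion. From $ab=0$ I would instead write $a\cdot b\cdot 1=0$ and apply the left symmetric condition $xyz=0 \implies \alpha(y)xz=0$ to the triple $(x,y,z)=(a,b,1)$, obtaining $\alpha(b)\cdot a\cdot 1=\alpha(b)a=0$, which is exactly the left $\alpha$-reversibility conclusion $ab=0 \implies \alpha(b)a=0$.

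The only point requiring care, rather than a genuine obstacle, is recognizing where the identity must be inserted so that the three-variable symmetric hypothesis collapses to the two-variable reversible statement, and in particular that the middle position (the one acted on by $\alpha$) is filled by the element that should carry $\alpha$ in the conclusion. This substitution is legitimate because all rings considered here are assumed to have a multiplicative identity throughout the paper, so no further computation is needed to complete the argument.
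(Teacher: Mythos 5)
Your proof is correct and follows essentially the same approach as the paper: insert the multiplicative identity to turn the two-element hypothesis $ab=0$ into a three-element product, then apply the right (respectively left) symmetric condition. In fact you are slightly more careful than the paper, which dismisses the left case with ``the same way as above'': you correctly observe that the left case requires writing $a\cdot b\cdot 1=0$ (identity in the last slot) rather than $1\cdot a\cdot b=0$, since the latter would only yield $\alpha(a)b=0$.
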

\begin{proof}Let $R$ be an $\alpha$-symmetric ring. Suppose that
$ab=0$ for $a, b \in R. $ Obviously, $1 \cdot a \cdot b=0. $ Since
$R$ is right $\alpha$-symmetric, then $b\alpha(a)=0. $ Hence $R$
is right $\alpha$-reversible. It can be easily shown that $R$ is
left $\alpha$-reversible by the same way as above. Therefore
$R$ is $\alpha$-reversible.
\end{proof}

Baser et al. (see \cite{baser}), defined the notion of an
$\alpha$- semicommutative ring with the endomorphism $\alpha$ as a
generalization of $\alpha$-rigid ring and an extension of
semicommutative ring.

An endomorphism $\alpha$ of a ring $R$ is called
\emph{semicommutative}
 if $ab=0$ implies $a R \alpha(b)=0$ for $a,b \in R.$ A ring $R$ is called
\emph{$\alpha$-semicommutative} if there exists a semicommutative
endomorphism $\alpha$ of $R.$\\

\begin{pr}An $\alpha$-symmetric ring is $\alpha$-semicommutative.
\end{pr}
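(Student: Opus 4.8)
The plan is to verify directly that the very endomorphism $\alpha$ witnessing the $\alpha$-symmetry of $R$ is also semicommutative in the required sense, that is, that $ab=0$ forces $aR\alpha(b)=0$. Only the \emph{right} $\alpha$-symmetric condition will be needed, and it is available since an $\alpha$-symmetric ring is in particular right $\alpha$-symmetric.

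First I would fix $a,b\in R$ with $ab=0$ and take an arbitrary $r\in R$. The key observation is that $ab=0$ immediately yields $abr=(ab)r=0$, so we have manufactured a vanishing triple product $a\cdot b\cdot r=0$ to which the right $\alpha$-symmetric property can be applied.

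Next I would apply Kwak's defining implication $xyz=0\Rightarrow xz\alpha(y)=0$ to this triple with $x=a$, $y=b$, $z=r$. This produces $ar\alpha(b)=0$. Since $r$ was arbitrary, the equality $ar\alpha(b)=0$ holds for every $r\in R$, i.e.\ $aR\alpha(b)=0$, which is precisely the semicommutativity condition for $\alpha$. Hence $R$ is $\alpha$-semicommutative.

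There is essentially no obstacle here: the whole argument rests on the single, trivial step of absorbing the extra factor $r$ as the third entry in the triple product, after which one application of the right $\alpha$-symmetric axiom finishes the proof. The only point requiring care is matching the roles of the letters in the definition so that $b$ occupies the middle position, since that is the factor to which $\alpha$ is applied in the conclusion $ac\alpha(b)=0$.
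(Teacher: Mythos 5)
Your proof is correct and follows exactly the paper's own argument: multiply $ab=0$ on the right by an arbitrary element to get a vanishing triple product, then apply the right $\alpha$-symmetric condition to move that element into the middle and obtain $ar\alpha(b)=0$. No differences worth noting.
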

\begin{proof} Suppose that $ab=0$ for $a, b \in R. $ Let $c$ be an arbitrary
element of $R. $ Then $abc=0 $ and $ac \alpha(b)=0. $ Hence
$aR\alpha(b)=0. $ Therefore $R$ is $\alpha$-semicommutative ring.
\end{proof}

\begin{pr} A reduced $\alpha$-reversible ring is $\alpha$-semicommutative.
\end{pr}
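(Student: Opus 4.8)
The plan is to prove the conclusion in its direct form: I would show that the very endomorphism $\alpha$ witnessing $\alpha$-reversibility is a \emph{semicommutative} endomorphism, i.e. that $ab=0$ forces $aR\alpha(b)=0$. The strategy is to combine the two consequences of reducedness already recorded in Theorem~1 — that a reduced ring is reversible and semicommutative — with one half of the $\alpha$-reversibility hypothesis, chaining the three facts so that the symbol $\alpha$ is moved onto $b$ while $a$ is kept on the left.

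Concretely, I would start from $ab=0$ for $a,b\in R$ and first apply \emph{left} $\alpha$-reversibility to obtain $\alpha(b)a=0$; this is the step that brings $\alpha$ into play. Then, since a reduced ring is reversible (reduced $\implies$ symmetric $\implies$ reversible by Theorem~1), I would regard $\alpha(b)a=0$ as a vanishing product of the two elements $\alpha(b)$ and $a$ and flip it to $a\alpha(b)=0$. Finally, because a reduced ring is also semicommutative, the relation $a\alpha(b)=0$ gives $aR\alpha(b)=0$, which is precisely the defining condition for $\alpha$ to be semicommutative; hence $R$ is $\alpha$-semicommutative.

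The argument is genuinely short, so the only real work is bookkeeping about which relations to invoke and in which order. The main point to get right — and thus the chief obstacle — is choosing the \emph{left} (rather than right) version of $\alpha$-reversibility, so that $\alpha$ attaches to $b$ and the product is rewritten as $\alpha(b)a$; one then has to recognize that reducedness supplies both the reversibility needed to restore the order to $a\alpha(b)$ and the ordinary semicommutativity needed to insert an arbitrary $r\in R$. Attempting to push the result through the right version of $\alpha$-reversibility instead would leave $\alpha$ sitting on $a$ and fail to close up into the required form $aR\alpha(b)=0$.
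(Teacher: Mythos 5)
Your proof is correct and follows essentially the same route as the paper's: both start from $ab=0$, apply left $\alpha$-reversibility to get $\alpha(b)a=0$, and then use reducedness to reorder the factors and insert an arbitrary element of $R$. The only difference is cosmetic — the paper multiplies by $c$ first and then flips $\alpha(b)ac=0$ to $ac\alpha(b)=0$ via reversibility, whereas you flip first to $a\alpha(b)=0$ and then invoke semicommutativity of the reduced ring.
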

\begin{proof} Suppose that $ab=0$ for $a, b \in R. $ Let $c$ be an arbitrary
element of a reduced $\alpha$-reversible ring $R. $ Then
$\alpha(b)a=0$ (by $\alpha$-reversibility) and $\alpha(b)ac=0. $
That is $ac\alpha (b)=0$ (by reducibility). Therefore $R$ is
$\alpha$-semicommutative.
\end{proof}

According to Hashemi and Moussavi \cite{c1}, a ring $R$ is
$\alpha$-compatible for each $a, b \in R, $ $a \alpha(b)=0$
 if and only if $ab=0$. Ben Yakoub and Louzari \cite{y}, called a ring $R$ satisfies the condition
($C_{\alpha}$) if whenever $a\alpha(b)=0$ with $a, b \in R, $ then
$ab=0. $ Clearly, $\alpha$- compatible ring satisfies the
condition ($C_{\alpha}$).

\begin{pr}An $\alpha$-reversible ring that satisfies the condition
$(C_{\alpha})$ is $\alpha$-semicommutative.
\end{pr}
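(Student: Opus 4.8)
The plan is to reduce this to the immediately preceding proposition (the reduced $\alpha$-reversible case). There the passage from $\alpha(b)ac=0$ to $ac\alpha(b)=0$ was carried out using reducedness, but reducedness was only used to flip a single two-term product, i.e.\ it was really only reversibility that was needed. So the strategy is: first show that the condition $(C_{\alpha})$ together with $\alpha$-reversibility forces $R$ to be reversible in the ordinary sense, and then re-run that earlier argument with reversibility replacing reducedness.

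First I would start from an arbitrary relation $ab=0$ and push it through right $\alpha$-reversibility to get $b\alpha(a)=0$. The key observation is that this last equation has exactly the shape $x\alpha(y)=0$ to which $(C_{\alpha})$ applies (with $x=b$, $y=a$); invoking $(C_{\alpha})$ therefore collapses it to $ba=0$. Since $a,b$ were an arbitrary pair with $ab=0$, this establishes that $R$ is reversible.

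With reversibility secured, I would fix an arbitrary $c\in R$ and again begin from $ab=0$. Left $\alpha$-reversibility yields $\alpha(b)a=0$, and right-multiplying by $c$ gives $\alpha(b)(ac)=0$. Applying reversibility to this two-term product flips it to $(ac)\alpha(b)=0$, that is, $ac\alpha(b)=0$. As $c$ ranges over $R$ this gives $aR\alpha(b)=0$, which is precisely the statement that $\alpha$ is semicommutative, so $R$ is $\alpha$-semicommutative.

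The only non-routine step is recognizing that $(C_{\alpha})$ is exactly the converse implication needed to undo the twist introduced by right $\alpha$-reversibility and thereby recover genuine reversibility; after that, everything is the reduced-case argument with reducedness weakened to reversibility. The one point worth double-checking is that reversibility really does suffice for the final reordering, and it does, because every zero product we manipulate ($\alpha(b)(ac)=0$ in particular) has just two factors, so flipping it requires nothing stronger than $xy=0\Rightarrow yx=0$.
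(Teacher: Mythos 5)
Your proof is correct and follows essentially the same route as the paper's: both start from $ab=0$, apply left $\alpha$-reversibility to get $\alpha(b)a=0$, right-multiply by $c$, and then flip $\alpha(b)(ac)=0$ using right $\alpha$-reversibility together with $(C_{\alpha})$. The only difference is cosmetic: you package the combination ``right $\alpha$-reversibility plus $(C_{\alpha})$ implies ordinary reversibility'' as an intermediate lemma and then invoke it, whereas the paper applies the same two steps directly to $\alpha(b)(ac)=0$, obtaining $ac\alpha^{2}(b)=0$ and then $ac\alpha(b)=0$.
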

\begin{proof}Suppose that $R$ is an $\alpha$-reversible ring with
($C_{\alpha}$) condition and $ab=0$ for $a, b \in R. $ Let $c$ be
an arbitrary element of $R. $ Hence $\alpha(b)a=0$ (by
$\alpha$-reversibility) and $\alpha(b)ac=0. $ Then $ac
\alpha^{2}(b)=0, $ due to $\alpha$-reversibility of $R$. Since $R$
satisfies the condition $(C_{\alpha}), $ we get $ac \alpha(b)=0. $
Therefore $R$ is $\alpha$-semicommutative.
\end{proof}

\begin{cl}An $\alpha$-reversible $\alpha$-compatible ring is $\alpha$-semicommutative.
\end{cl}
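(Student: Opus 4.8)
The plan is to obtain this statement as an immediate consequence of the immediately preceding proposition, which asserts that an $\alpha$-reversible ring satisfying the condition $(C_\alpha)$ is $\alpha$-semicommutative. The only bridge needed is the observation already recorded just before the statement: every $\alpha$-compatible ring satisfies $(C_\alpha)$. Indeed, $\alpha$-compatibility is the biconditional $a\alpha(b)=0 \Longleftrightarrow ab=0$, while $(C_\alpha)$ is precisely its forward direction $a\alpha(b)=0 \Longrightarrow ab=0$; hence compatibility trivially entails $(C_\alpha)$.

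Thus I would first note that a ring which is both $\alpha$-reversible and $\alpha$-compatible is, in particular, an $\alpha$-reversible ring satisfying $(C_\alpha)$, and then simply invoke the previous proposition to conclude that it is $\alpha$-semicommutative. No further computation is required.

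Should a self-contained argument be preferred over the citation, I would unwind the proof of the previous proposition in this setting. Assume $ab=0$ and fix $c\in R$. Left $\alpha$-reversibility gives $\alpha(b)a=0$, hence $\alpha(b)ac=0$; right $\alpha$-reversibility applied to $\alpha(b)\cdot(ac)=0$ yields $ac\,\alpha^2(b)=0$, that is $(ac)\,\alpha(\alpha(b))=0$; finally $(C_\alpha)$, supplied by compatibility, converts this into $ac\,\alpha(b)=0$. Since $c$ was arbitrary, $aR\alpha(b)=0$, which is exactly $\alpha$-semicommutativity.

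There is essentially no obstacle: the substantive work was carried out in the preceding proposition, and the corollary merely specializes its hypothesis. The single point worth stating explicitly is the one-directional implication ``$\alpha$-compatible $\Rightarrow (C_\alpha)$,'' which is immediate from the definitions.
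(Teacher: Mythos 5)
Your proposal is correct and matches the paper's intent exactly: the paper's proof is simply ``It is obvious,'' meaning precisely the deduction you spell out, namely that $\alpha$-compatibility gives the condition $(C_{\alpha})$ and the preceding proposition then applies. Your unwound version also faithfully reproduces the chain of steps in the paper's proof of that proposition, so there is no divergence in approach.
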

\begin{proof}
It is obvious.
\end{proof}

Ba\c{s}er et al \cite{baser}, proved that for
$\alpha$-semicommutative ring $R$, $\alpha(1)=1$ if and only if
$\alpha(e)=e, $ where $1$ is the identity and $e$ is the
idempotent element of $R. $

\begin{pr}  An $\alpha$-semicommutative ring $R$ with $\alpha(1)=1$
is abelian.
\end{pr}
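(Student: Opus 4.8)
The plan is to adapt the argument used for ``$semicommutative \implies abelian$'' in the fourth part of Theorem~1, the only new ingredient being the careful handling of the endomorphism $\alpha$ appearing in the $\alpha$-semicommutativity condition $ab=0 \Rightarrow aR\alpha(b)=0$. The decisive preliminary step is to observe that, under the standing hypothesis $\alpha(1)=1$, the result of Ba\c{s}er et al. \cite{baser} quoted just above this proposition gives $\alpha(e)=e$ for every idempotent $e\in R$. This is exactly what lets the $\alpha$'s disappear from the semicommutativity relations without leaving any residual twist, and it is the point where the hypothesis $\alpha(1)=1$ is genuinely used.

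Concretely, I would fix an idempotent $e\in R$, record that $\alpha(e)=e$ (hence also $\alpha(1-e)=\alpha(1)-\alpha(e)=1-e$), and then exploit the two orthogonality relations $e(1-e)=0$ and $(1-e)e=0$. Applying $\alpha$-semicommutativity to $e(1-e)=0$ yields $eR\alpha(1-e)=0$, which by $\alpha(1-e)=1-e$ becomes $eR(1-e)=0$; thus $ea(1-e)=0$ for every $a\in R$, i.e. $ea=eae$. Symmetrically, applying $\alpha$-semicommutativity to $(1-e)e=0$ gives $(1-e)R\alpha(e)=0$, and since $\alpha(e)=e$ this reads $(1-e)Re=0$, so $(1-e)ae=0$ for every $a\in R$, i.e. $ae=eae$. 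Combining the two identities produces $ea=eae=ae$ for all $a\in R$, so $e$ is central; as $e$ was an arbitrary idempotent, $R$ is abelian.

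There is no computationally hard step here: each half is a single substitution into the defining condition of $\alpha$-semicommutativity followed by a left/right cancellation of the form $ea(1-e)=0 \Leftrightarrow ea=eae$. The only substantive obstacle is the reduction $\alpha(e)=e$; without it the terms $\alpha(1-e)$ and $\alpha(e)$ would not collapse, the equalities $ea=eae$ and $ae=eae$ would fail to materialize, and centrality of $e$ could not be concluded. Everything therefore hinges on invoking the Ba\c{s}er et al. equivalence to convert $\alpha(1)=1$ into $\alpha(e)=e$ before running the orthogonal-idempotent computation.
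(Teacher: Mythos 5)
Your proof is correct and follows essentially the same route as the paper: both apply $\alpha$-semicommutativity to the orthogonality relations $e(1-e)=0$ and $(1-e)e=0$ and combine $ea=eae$ with $ae=eae$ to get centrality of $e$. You merely make explicit the step the paper leaves implicit, namely invoking the Ba\c{s}er et al.\ equivalence to convert $\alpha(1)=1$ into $\alpha(e)=e$ so that $\alpha(1-e)=1-e$, which is indeed where that hypothesis is used.
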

\begin{proof} Let $e$ be an idempotent element of $R. $ Then $e(1-e)=0$
and $eR \alpha(1-e)=0. $ On the other hand, $(1-e)e=0$ and
$(1-e)R\alpha(e)=0. $\\ Hence $er(1-e)=(1-e)re=0$ for all $r \in
R, $ This implies that $er=re$ for all $r \in R. $ Therefore $R$
is abelian.
\end{proof}

The following example shows that the condition ``$\alpha(1)=1$''
can not be dropped.
\begin{ex} Let $R=\left\{\left(
\begin{array}{cc}
a & 0 \\
b & c
\end{array}
\right)| a,b,c \in \mathbb{Z} \right\},$ and $\alpha$ be an
endomorphism of $R$ defined by $\alpha\left(\left(
\begin{array}{cc}
a & 0 \\
b & c%
\end{array}%
\right)\right) =\left(
\begin{array}{cc}
0 & 0 \\
0& c
\end{array}%
\right), (\alpha(1)\neq 1). $\\
For $A=\left(
\begin{array}{cc}
a_{1} & 0 \\
b_{1}& c_{1}
\end{array}%
\right), $ and $B=\left(
\begin{array}{cc}
a_{2}  & 0 \\
b_{2}& c_{2}
\end{array}
\right)\in R $ if $AB=0, $ we obtain $c_{1}c_{2}=0. $ Let
$C=\left(
\begin{array}{cc}
a_{3}  & 0 \\
b_{3}& c_{3}
\end{array}
\right)$ be an arbitrary element of $R. $ Then $AC\alpha(B)=\left(
\begin{array}{cc}
0  & 0 \\
0& c_{1}c_{2}c_{3}
\end{array}
\right)=0. $ Hence $AR\alpha(B)=0. $ Therefore $R$ is
$\alpha$-semicommutative ring.\\
Two idempotents of $R$ (i.e., $\left(
\begin{array}{cc}
0  & 0 \\
b  & 1
\end{array}
\right)$ and $\left(
\begin{array}{cc}
1  & 0 \\
b & 0
\end{array}
\right)$) aren't central. Therefore $R$ is not abelian.
\end{ex}

In the next theorem we show the relationship between
semicommutative and $\alpha$-semicommutative rings.

\begin{theor} Let $R$ be an $\alpha$-compatible ring. Then the
following hold.
\begin{enumerate}
\item $R$ is symmetric if and only if $R$ is $\alpha$-symmetric
ring.
 \item  $R$ is reversible if and only if $R$ is $\alpha$-reversible ring.
\item $R$ is semicommutative if and only if $R$ is
 $\alpha$-semicommutative.
\end{enumerate}
\end{theor}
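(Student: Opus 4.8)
The plan is to extract a single transfer lemma from $\alpha$-compatibility and then read off all three equivalences from it. Write the compatibility hypothesis as $a\alpha(b)=0 \iff ab=0$. First I would record two elementary consequences: taking $a=1$ shows $\alpha$ is injective (since $\alpha(b)=0$ forces $1\cdot b=0$), and applying the endomorphism $\alpha$ to a relation $ab=0$ gives $\alpha(a)\alpha(b)=\alpha(ab)=0$. Combining these I would prove the transfer lemma: for all $a,b\in R$ and all $m,n\ge 0$,
\[ ab=0 \iff \alpha^{m}(a)\,\alpha^{n}(b)=0. \]
The $\alpha$'s on the right factor are handled directly by the defining equivalence; the $\alpha$'s on the left factor come from pushing $ab=0$ through $\alpha$ to $\alpha(a)\alpha(b)=0$ and then stripping the inner $\alpha$ by compatibility (with $x=\alpha(a)$), injectivity supplying the converse. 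This lemma lets me freely insert or delete powers of $\alpha$ on either side of any vanishing product.

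With the lemma in hand, each equivalence follows by the same unwrap--apply--rewrap pattern. For (2), from $ab=0$ reversibility gives $ba=0$, and the lemma upgrades this to both $b\alpha(a)=0$ and $\alpha(b)a=0$, so $R$ is $\alpha$-reversible; conversely $b\alpha(a)=0$ collapses to $ba=0$. For (1), from $abc=0$ symmetry gives $acb=0$, hence $ac\alpha(b)=0$ (right $\alpha$-symmetry) by the lemma, while for the left version I would first use symmetric $\Rightarrow$ reversible (Theorem~1) to pass $acb=0$ to $bac=0$ and then apply the lemma to reach $\alpha(b)ac=0$; the converse deletes the $\alpha$ from $ac\alpha(b)=0$ to recover $acb=0$. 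For (3), I would apply the lemma to the products $(ar)b$: semicommutativity gives $arb=0$ for every $r\in R$, the lemma converts each to $ar\alpha(b)=0$, so $aR\alpha(b)=0$, and the reverse direction again just deletes the $\alpha$.

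The hard part will be the left half of the transfer lemma. The compatibility condition as stated only controls $\alpha$ on the \emph{right} factor, so moving $\alpha$ onto the left factor is not automatic; it is precisely here that one must invoke both the multiplicativity and the injectivity of $\alpha$. Once that is secured, the left-symmetric and left-reversible cases — otherwise the only awkward ones — reduce to exactly the same routine bookkeeping as their right-hand counterparts.
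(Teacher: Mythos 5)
Your proposal is correct, and it reaches the same three equivalences by the same unwrap--apply--rewrap pattern as the paper, but the mechanism you use for the left-handed conditions is genuinely different. The paper only ever uses compatibility in its defining form, on the \emph{right} factor ($ab=0 \iff a\alpha(b)=0$); to obtain the left versions (left $\alpha$-symmetric, left $\alpha$-reversible) it first produces the right version and then flips the whole product using the reversibility of $R$, e.g.\ $ac\alpha(b)=0 \Rightarrow \alpha(b)ac=0$. You instead front-load a two-sided transfer lemma, $ab=0 \iff \alpha^{m}(a)\alpha^{n}(b)=0$, whose left half rests on the observation that compatibility with $a=1$ forces $\alpha$ to be injective, so that $\alpha(a)b=0 \Rightarrow \alpha(a)\alpha(b)=\alpha(ab)=0 \Rightarrow ab=0$, with the forward direction coming from multiplicativity plus right-compatibility. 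Your lemma is strictly stronger than what the paper uses: it lets you move $\alpha$ onto \emph{either} factor in any $\alpha$-compatible ring, with no reversibility hypothesis, and it is reusable outside this theorem (it is essentially the standard compatibility lemma of Hashemi--Moussavi). The cost is the extra injectivity argument; the paper's route is shorter here only because reversibility happens to be available in every case where a left-handed conclusion is needed. Note that you still invoke reversibility once, to pass from $acb=0$ to $bac=0$ in the left $\alpha$-symmetric case --- that is unavoidable in either approach, since the permutation of factors (as opposed to the placement of $\alpha$) genuinely requires it.
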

\begin{proof}
\begin{enumerate}
\item Let $R$ be a symmetric ring and  $abc=0,$ for $a, b, c \in R.
$ Then $acb=0$ (by symmetric property) and $ac \alpha(b)=0$ (by
$\alpha$-compatibility). Hence $R$ is right $\alpha$-symmetric.
Since $R$ is symmetric, then it is reversible and $\alpha(b)ac=0.
$ Thus $R$ is left $\alpha$-symmetric. Therefore
$R$ is $\alpha$-symmetric ring.\\
Conversely, let $R$ be an $\alpha$-symmetric ring and $abc=0$ for
$a, b, c \in R. $ Then $ac \alpha(b)=0$ and $acb=0$ (by
$\alpha$-compatibility). Therefore $R$ is symmetric ring. \item
Let $R$ be a reversible ring and ab=0, for $a, b \in R. $ Then
$ba=0. $ Hence $b\alpha(a)=0$(by $\alpha$-compatibility).
Therefore $R$ is right $\alpha$-reversible.\\
On the other hand, $ab=0$ we have $a \alpha(b)=0. $ Hence
$\alpha(b)a=0$(by reversibility). Thus $R$ is left
$\alpha$-reversible. Therefore $R$ is $\alpha$-reversible.\\
Conversely, let  $ab=0$ for $a, b \in R. $ Then $b\alpha(a)=0$ (by
right $\alpha$-reversibility) and $ba=0$(by
$\alpha$-compatibility). Therefore $R$ is reversible. \item Let
$R$ be a semicommutative ring and $ab=0$ for $a, b \in R. $ Hence
$aRb=0$. Since $R$ is $\alpha$-compatible, it implies that $aR
\alpha(b)=0. $ Therefore $R$ is $\alpha$-semicommutative ring. The
``only if'' part is obvious.
 \end{enumerate}
\end{proof}

\begin{pr} Let $R$ be an
$\alpha$-semicommutative ring with ($C_{\alpha}$) condition then
$R$ is semicommutative.
\end{pr}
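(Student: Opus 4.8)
The plan is to start from the defining hypothesis of semicommutativity and convert it into the desired conclusion by applying the two available properties in succession. Suppose $ab=0$ for $a,b\in R$; the goal is to produce $aRb=0$, that is, to show $acb=0$ for every $c\in R$.

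First I would invoke the $\alpha$-semicommutativity of $R$. Since $ab=0$, the defining condition of the semicommutative endomorphism $\alpha$ gives $aR\alpha(b)=0$, and in particular $ac\,\alpha(b)=0$ for each fixed $c\in R$. Next I would apply the condition $(C_{\alpha})$, and the one point requiring care is the correct reading of $(C_{\alpha})$: it must be applied not to the original pair $(a,b)$ but to the pair $(ac,b)$, with the product $ac$ playing the role of the first argument. From $(ac)\alpha(b)=0$ the condition $(C_{\alpha})$ then yields $(ac)b=0$, i.e. $acb=0$. Since $c\in R$ was arbitrary, this gives $aRb=0$, which is exactly the semicommutativity of $R$.

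There is no genuine obstacle here: the result follows by a direct two-step chain — pass from $ab=0$ to $ac\,\alpha(b)=0$ via $\alpha$-semicommutativity, then strip off the $\alpha$ via $(C_{\alpha})$ to reach $acb=0$. The argument is essentially the dual of the earlier proof that an $\alpha$-reversible ring satisfying $(C_{\alpha})$ is $\alpha$-semicommutative, and the only subtlety worth stating explicitly is the verification that the hypothesis $(ac)\alpha(b)=0$ of $(C_{\alpha})$ genuinely holds, which is guaranteed by the first step.
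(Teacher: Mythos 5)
Your proof is correct and follows the same route as the paper: use $\alpha$-semicommutativity to get $aR\alpha(b)=0$, then apply $(C_{\alpha})$ elementwise to each product $ac\,\alpha(b)=0$ to conclude $acb=0$. Your explicit remark that $(C_{\alpha})$ must be applied to the pair $(ac,b)$ rather than $(a,b)$ is a welcome clarification of a step the paper leaves implicit.
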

\begin{proof}Let $R$ be an $\alpha$-semicommutative ring and
$ab=0$ for $a, b \in R. $ Then $aR \alpha(b)=0. $ Since $R$
satisfies the condition ($C_{\alpha}$), we get $aRb=0. $ Therefore
$R$ is semicommutative ring.
\end{proof}

All the previous results are summarized in Figure~2.
\begin{figure}[!hbtp]
\begin{center}
\includegraphics[width=5 in]{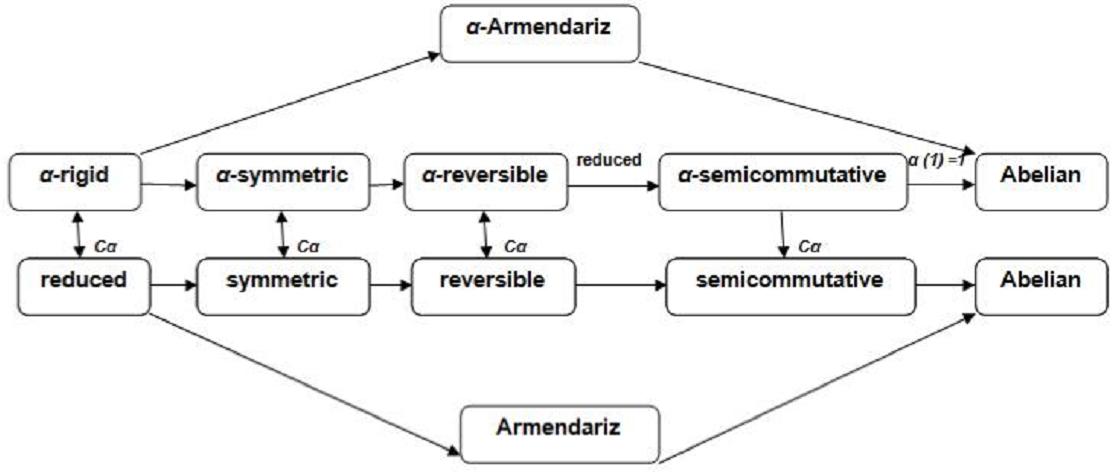}
\caption{\label{Fig2_} Links between skew
version of rings.}
 \end{center}
 \end{figure}

\newpage


\begin{thebibliography}{9}

\bibitem{an} D.D. Anderson and V. Camillo, Armendariz rings and Gaussian
rings, \emph{Comm. Algebra}, 26(7) (1998), 2265-2272.

\bibitem{ep} E.P. Armendariz, A note on extesions of Baer and $p.p.-$ring,
\emph{J. Aust. Math. Soc.}, 18 (1974), 470-473.

\bibitem{baser} M. Ba\c{s}er , A. Harmanci, and T.K.Kwak, Generalized semicommutative rings and their extensions,
\emph{Bull. Korean Math. Soc.}, 45(2) (2008), 285-297.

 \bibitem{rev} M. Ba\c{s}er , C.Y. Hong, and T.K. Kwak, On extended
 reversible rings, \emph{Algebra Colloq.}, 16(1) (2009), 37-48.

\bibitem{y} L. Ben Yakoub and M. Louzari, Ore extensions of extended symmetric
 and reversible rings, \emph {International Journal of Algebra},  3(9) (2009), 423-433.

\bibitem{c1}E. Hashemi and A. Moussavi, Polynomial extensions of
quasi-Baer rings, \emph {Acta. Math. Hungar.,} 107(3) (2005),
207-224.

\bibitem{skew} C.Y. Hong, N.K. Kim and T.K. Kwak, On skew Armendariz rings,%
\emph{\ Comm. Algebra}, 31(1) (2003), 103-122.

\bibitem{rigid} C.Y. Hong, N.K. Kim and T.K. Kwak, Ore extensions of Baer and
$p.p.$-rings, \emph{J. Pure Appl. Algebra}, 151 (2000), 215-226.

\bibitem{a} C.Y. Hong, T.K. Kwak and S.T. Rizvi, Extensions of generalized
Armendariz rings, \emph{Algebra Colloq.}, 13(2) (2006), 253-266.

\bibitem{k} I. Kaplansky, Rings of operations, Mathematics Lecture Notes
series, Benjamin, New York (1965).

\bibitem{lee} N.K. Kim and Y. Lee, Armendariz rings and reduced rings, \emph{J. Algebra}, 223(2) (2000), 477-488.

\bibitem{k1} J. Krempa, Some examples of reduced rings, \emph{Algebra Colloq.},
3(4) (1996), 289-300.

\bibitem{sym}T.K. Kwak, Extensions of extended symmetric rings, \emph{Bull. Korean Math. Soc.}, 44(4)
(2007), 777-788.

\bibitem{mc} Z. Lei, J. Chen and Z. Ying, A Question on McCoy rings, \emph{Bull.
Aust. Math. Soc.}, 76 (2007), 137-141.

\bibitem{c} Y. Lee, N.K. Kim  and C.Y. Hong, Counterexample on Baer rings,\emph{
Comm. Algebra}, 25(2) (1997), 497-507.

\bibitem{h} H. Pourtaherian and I.S. Rakhimov, On Armendariz Ring and its
generalizations,  \emph{JP Journal of Algebra, Number Theory and
Applications}, 15(2) (2009), 101 - 111.

\bibitem{?} H. Pourtaherian and I.S. Rakhimov, On extended quasi-Armendariz
rings, arXiv:1205.5845 v1.[math.RA], (2012).

\bibitem{??} H. Pourtaherian and I.S. Rakhimov, On skew version of reversible rings,
\emph{International Journal of Pure and Applied Mathematics},
73(3) (2011), 267-280.

\bibitem{rege} M.B. Rege, S. Chhawchharia, Armendariz rings, \emph{Proc.
Japan Acad. Ser. A Math. Sci.}, 73 (1997), 14-17.

\bibitem{s} W.R. Scott, Divisors of zero in polynomial rings,
\emph{Amer. Math. Monthly}, 61(5) (1954), 336.

\end{thebibliography}
\end{document}